\documentclass[12pt]{article}
\usepackage{amsmath}
\usepackage{amssymb}
\usepackage{graphicx}
\usepackage{hyperref}

\makeatletter

\usepackage{amsthm}\usepackage{enumerate}\usepackage{fullpage}

\theoremstyle{plain}
\newtheorem{lemma}{Lemma}
\newtheorem{theorem}[lemma]{Theorem}

\newtheorem{claim}{Claim}

\theoremstyle{definition}

\newtheorem{question}[lemma]{Question}

\makeatother

\begin{document}

\title{On choosability with separation of planar graphs with lists of different
sizes}

\author{H. A. Kierstead %
\thanks{School of Mathematical and Statistical Sciences, Arizona State University,
Tempe, AZ 85287, USA. E-mail: \texttt{hal.kierstead@me.com}. %
Research of this author is supported in part by NSA grant H98230-12-1-0212.} \and Bernard Lidick\'y %
\thanks{Department of Mathematics, University of Illinois. E-mail: \texttt{lidicky@illinois.edu}%
} }

\date{\today}

\maketitle
 
\begin{abstract}
A \emph{$(k,d)$-list assignment} $L$ of a graph $G$ is a mapping
that assigns to each vertex $v$ a list $L(v)$ of at least $k$ colors
and for any adjacent pair $xy$, the lists $L(x)$ and $L(y)$ share
at most $d$ colors. A graph $G$ is $(k,d)$-choosable if there exists
an $L$-coloring of $G$ for every $(k,d)$-list assignment $L$.
This concept is also known as choosability with separation.

It is known that planar graphs are $(4,1)$-choosable but it is not
known if planar graphs are $(3,1)$-choosable. We strengthen the result
that planar graphs are $(4,1)$-choosable by allowing an independent
set of vertices to have lists of size 3 instead of 4. 
\end{abstract}

\section{Introduction}

Given a graph $G$, a \textit{list assignment} $L$ is a mapping assigning
to each vertex $v\in V(G)$ a list of colors $L(v)$. An \emph{$L$-coloring}
is a vertex coloring $\varphi$ such that $\varphi(v)\in L(v)$ for
each vertex $v$ and $\varphi(x)\neq\varphi(y)$ for each edge $xy$.
A graph $G$ is said to be \emph{$k$-choosable} if there is an $L$-coloring
for each list assignment $L$ where $|L(v)|\geq k$ for each vertex
$v$. The minimum such $k$ is known as the \emph{choosability} of
$G$, denoted $\chi_{\ell}(G)$. A graph $G$ is said to be \emph{$(k,d)$-choosable}
if there is an $L$-coloring for each list assignment $L$ where $|L(v)|\geq k$
for each vertex $v$ and $|L(x)\cap L(y)|\leq d$ for each edge $xy$.

This concept is called \emph{choosability with separation}, since
the second parameter may force the lists of adjacent vertices to be
somewhat separated. If $G$ is $(k,d)$-choosable, then $G$ is also
$(k',d')$-choosable for all $k'\geq k$ and $d'\leq d$. A graph
is $(k,k)$-choosable if and only if it is $k$-choosable. Clearly,
all graphs are $(k,0)$-choosable for $k\geq1$. Thus, for a graph
$G$ and each $1\leq k<\chi_{\ell}(G)$, there is some threshold $d\in\{0,\dots,k-1\}$
such that $G$ is $(k,d)$-choosable but not $(k,d+1)$-choosable.

The concept of choosability with separation was introduced by Kratochv\'{\i}l,
Tuza, and Voigt~\cite{ktv}. They used the following, more general
definition. A graph $G$ is \emph{$(p,q,r)$-choosable}, if for every
list assignment $L$ with $|L(v)|\geq p$ for each $v\in V(G)$ and
$|L(u)\cap L(v)|\leq p-r$ whenever $u,v$ are adjacent vertices,
$G$ is $q$-tuple $L$-colorable. Since we consider only $q=1$ in
this paper, we use a simpler notation. They investigate this concept
for both complete graphs and sparse graphs. The study of dense graphs
were extended to complete bipartite graphs and multipartite graphs
by F\"uredi, Kostochka, and Kumbhat~\cite{zkk1,zkkarxiv}.

Thomassen~\cite{thomassen1994} proved that planar graphs are 5-choosable,
and hence they are $(5,d)$-choosable for all $d$. Voigt~\cite{95Voigt}
constructed a non-$4$-choosable planar graph, and there are also
examples of non-$(4,3)$-choosable planar graphs. Kratochv\'{\i}l, Tuza,
and Voigt~\cite{ktv} showed that all planar graphs are $(4,1)$-choosable
and asked:
\begin{question}[\cite{ktv}]
Are all planar graphs $(4,2)$-choosable?
\end{question}
Voigt~\cite{93Voigt} also constructed a non-$3$-choosable triangle-free
planar graph. \v{S}krekovski~\cite{riste} observed that there are examples
of triangle-free planar graphs that are not $(3,2)$-choosable, and
posed:
\begin{question}[\cite{riste}]\label{q:31}
Are all planar graphs $(3,1)$-choosable?
\end{question}
Kratochv\'{\i}l, Tuza and Voigt~\cite{ktv} proved a partial case of 
Question~\ref{q:31} by showing that every triangle-free planar graph is $(3,1)$-choosable.

Choi et. al~\cite{CLS} proved that every planar graph without $4$-cycles
is $(3,1)$-choosable and that every planar graph without $5$-cycles
and $6$-cycles is $(3,1)$-choosable.

In this paper we give a strengthening of the result that every planar
graph is $(4,1)$-choosable by allowing some vertices to have lists
of size three. In a $(4,1)$-list assignment $L$ on $G$, for every
$uv\in E(G)$ holds that $|L(u)\cup L(v)|\geq7$. In a $(3,1)$-list
assignment $L$, for every $uv\in E(G)$ holds that $|L(u)\cup L(v)|\geq5$.
An intermediate step is to investigate the case where for every $uv\in E(G)$
holds that $|L(u)\cup L(v)|\geq6$.

A \emph{$(*,1)$-list assignment} is a list assignment $L$ where
$|L(v)|\geq1$ and $|L(u)\cap L(v)|\leq1$ for every pair of adjacent
vertices $u,v$.

The main result of this paper is the following theorem. 

\begin{theorem}\label{thm:6} Let $G$ be a planar graph and $I\subseteq V(G)$
be an independent set. If $L$ is a $(*,1)$-list assignment such
that $|L(v)|\geq3$ for every $v\in I$ and $|L(v)|\geq4$ for every
$v\in V(G)\setminus I$ then $G$ has an $L$-coloring. \end{theorem}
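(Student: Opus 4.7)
The plan is the standard ``minimum counterexample plus discharging'' framework. Let $(G,I,L)$ be a counterexample to Theorem~\ref{thm:6} that minimizes $|V(G)|+|E(G)|$; throughout, every strictly smaller planar graph with its induced independent set and list assignment is $L$-colorable, and we derive structural properties of $G$ by ruling out local configurations whose presence would let us color a smaller graph and then extend back.

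The easy reductions come first: every $v \in V(G)\setminus I$ must satisfy $d(v)\geq 4$ and every $v \in I$ must satisfy $d(v)\geq 3$. If $v$ had smaller degree, deleting it would leave a smaller valid instance which by minimality admits an $L$-coloring, and the number of colors forbidden at $v$ by its neighbors would be strictly less than $|L(v)|$, giving an extension. Similar gluing arguments forbid separating triangles and a few other small cuts, so we may assume $G$ is $2$-connected with no small separator and with the stated degree bounds.

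The core of the argument is a catalog of forbidden local configurations around $3$-vertices of $I$. Such a vertex $v$ has $|L(v)|=3$ and three neighbors $u_1,u_2,u_3$ of degree $\geq 4$ lying outside $I$, and the $(*,1)$-condition forces $|L(v)\cap L(u_i)|\leq 1$ and $|L(u_i)\cap L(u_j)|\leq 1$ whenever $u_iu_j\in E(G)$. For each candidate configuration one either identifies or deletes vertices and modifies the affected lists so as to encode the constraints $v$ imposes, invokes minimality on the smaller instance, and then argues that a color from $L(v)$ is available for $v$. Typical patterns to forbid include two $3$-vertices of $I$ sharing a $3$-face, a $3$-vertex of $I$ adjacent to a $4$-vertex whose other neighbors are all of low degree, and more generally $3$-vertices of $I$ sitting in tight local environments of $4$-vertices of $V(G)\setminus I$. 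Because $I$ is independent, we know the neighbors of $v$ are outside $I$ and hence have lists of size $\geq 4$, which gives us room to maneuver when redistributing colors.

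The discharging phase assigns each vertex and face $x$ an initial charge $d(x)-4$, summing to $-8$ by Euler's formula. Rules transfer charge from vertices of degree $\geq 5$ and from faces of length $\geq 5$ to the deficient objects: $3$-vertices of $I$ and triangles, each of deficit $1$. The goal is nonnegative final charge everywhere, contradicting the total of $-8$. The main obstacle is the case of a $3$-vertex $v\in I$ whose three neighbors are all $4$-vertices outside $I$: those neighbors have zero initial charge and can only pass on charge they themselves receive, so the local face geometry around $v$ and its neighbors must be tightly constrained by the reducibility lemmas. Designing a list of forbidden configurations that is strong enough to make the discharging balance at every such vertex, while each configuration is genuinely reducible in the sense above, is the delicate part of the proof.
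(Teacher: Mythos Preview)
What you have written is a plan, not a proof. You outline a discharging framework, state the initial charges, and then explicitly concede that ``designing a list of forbidden configurations that is strong enough to make the discharging balance \dots\ is the delicate part of the proof'' --- and you do not carry that part out. No reducible configuration is actually proved reducible, no discharging rule is actually stated, and the critical case you yourself identify (a $3$-vertex of $I$ with three $4$-neighbors) is left unresolved. As written this is a description of how one might hope to argue, not an argument. There is also a subtler issue: the $(*,1)$ hypothesis constrains the \emph{lists}, not the graph, so any ``reducible configuration'' must be purely structural while its reducibility proof must exploit $|L(u)\cap L(v)|\le 1$ in an essential way. You gesture at this (``gives us room to maneuver when redistributing colors'') but never use it concretely; none of your sample configurations comes with a reduction that shows where the separation hypothesis enters.

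The paper does something quite different: it proves a stronger, more inductive statement in the style of Thomassen's $5$-choosability proof. One fixes a plane embedding, allows a precolored path $P$ of length at most one on the outer face $F$, and weakens the list requirement to $|L(v)|\ge 4-\iota_I(v)-\iota_F(v)-2\iota_P(v)$. After establishing that a minimum counterexample is $2$-connected, has no separating triangle through $I$, and has only very restricted chords on $F$, one locates a vertex (or a short segment) on $F\setminus P$, precolors it avoiding the shared colors with its $F$-neighbors, deletes it, and applies induction. The $(*,1)$ condition is used pointwise via the observation that $|L(u)\cap L(v)|\le 1$ lets one always pick such avoiding colors. This sidesteps entirely the need for a global charge balance and a catalog of configurations; the whole proof fits in a couple of pages. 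If you want to pursue discharging you would need to actually produce the configuration list and rules and verify both reducibility and nonnegativity --- that is the entire content, and it is missing here.
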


The following theorem shows it is not possible to strengthen 
Theorem~\ref{thm:6} by allowing $|L(v)|\geq2$ for every vertex
$v\in V(G)$ and requiring that $|L(u)\cup L(v)|\geq6$ for every
$uv\in E(G)$.

\begin{theorem}\label{thm:3necessary} For every $k$ there exists
a planar graph $G$ and a $(*,1)$-list assignment $L$ such that
$|L(v)|\geq2$ for every $v\in V(G)$, $|L(u)\cup L(v)|\geq k$ for
every $uv\in E(G)$, and $G$ is not $L$-colorable. \end{theorem}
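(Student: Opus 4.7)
The plan is to exhibit, for each $k$, an explicit planar bipartite graph together with a $(*,1)$-list assignment that admits no proper coloring. Take $G = K_{2,(k-1)^2}$, which is planar (since $K_{2,m}$ is planar for every $m$), with bipartition $\{u_1, u_2\} \cup \{v_{i,j} : 1 \leq i, j \leq k-1\}$. Using two disjoint color sets $A = \{a_1, \dots, a_{k-1}\}$ and $B = \{b_1, \dots, b_{k-1}\}$, set $L(u_1) = A$, $L(u_2) = B$, and $L(v_{i,j}) = \{a_i, b_j\}$. All lists have size at least $2$ provided $k \geq 3$, and the cases $k \leq 2$ are subsumed because a counterexample for $k = 3$ automatically satisfies $|L(u) \cup L(v)| \geq 3 \geq k$ on every edge. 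The only edges of $G$ are $u_1 v_{i,j}$ and $u_2 v_{i,j}$, whose list intersections are the singletons $\{a_i\}$ and $\{b_j\}$ respectively, so $L$ is a $(*,1)$-assignment, and the list union on each edge is $(k-1) + 2 - 1 = k$, as required.

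For non-colorability, I would simply observe that any proper $L$-coloring $\varphi$ must set $\varphi(u_1) = a_{i_0}$ and $\varphi(u_2) = b_{j_0}$ for some $i_0, j_0$, and then the ``diagonal'' vertex $v_{i_0, j_0}$ has list $\{a_{i_0}, b_{j_0}\}$, whose two colors are already used by its two neighbors $u_1$ and $u_2$, leaving no valid color for $v_{i_0,j_0}$. The only real design choice is the second-part size $(k-1)^2$: it must be at least $|A|\cdot|B|$ so that every possible joint choice $(c_1, c_2) \in A \times B$ of colors at $u_1, u_2$ is blocked by some $v_{i,j}$ with $L(v_{i,j}) = \{c_1, c_2\}$. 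There is no serious obstacle here; the construction is the natural generalization of the standard $K_{2,4}$ non-$(2,1)$-choosability example, enlarging $|A|$ and $|B|$ from $2$ to $k-1$ to simultaneously inflate the list unions and enumerate all pairs $(c_1, c_2)$.
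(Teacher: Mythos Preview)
Your proposal is correct and is essentially the same construction as the paper's: both take $K_{2,(k-1)^2}$ with lists $A$, $B$ of size $k-1$ on the size-two side and all pairs $\{a_i,b_j\}$ on the other side, then argue the diagonal vertex blocks any coloring. You additionally make explicit the planarity of $K_{2,m}$ and the disposition of the trivial cases $k\le 2$, which the paper leaves implicit.
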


We first give some notation. In the next section, we prove Theorem~\ref{thm:6}
using Thomassen's precoloring extension method. In the last section
we show a construction proving Theorem~\ref{thm:3necessary}.

\subsection{Notation}

\global\long\def\cin{\mathop{\mathrm{Int}}}
 \global\long\def\cex{\mathop{\mathrm{Ext}}}
 Given a graph $G$ and a cycle $K\subset G$, an edge $uv$ of $G$
is a \emph{chord} of $K$ if $u,v\in V(K)$, but $uv$ is not an edge
of $K$. If $G$ is a plane graph, then let $\cin_{K}(G)$ be the
subgraph of $G$ consisting of the vertices and edges drawn inside
the closed disc bounded by $K$, and let $\cex_{K}(G)$ be the subgraph
of $G$ obtained by removing all vertices and edges drawn inside the
open disc bounded by $K$. In particular, $K=\cin_{K}(G)\cap\cex_{K}(G)$.
Finally, denote the characteristic function of a set $S$ by $\iota_{S}$.
So $\iota_{S}(x)=1$ if $x\in S$; else $\iota_{S}(x)=0$.


\section{Main theorem}

In this section, we prove Theorem~\ref{thm:6} by proving a slightly
stronger theorem that is more amenable to induction. Observe that
any list assignment satisfying the assumptions of Theorem~\ref{thm:6}
also satisfies the conditions of the following theorem.

\begin{theorem}\label{thm:6thomassen} Let $G$ be a plane graph
with outer face $F$ and let $P$ be a subpath of $F$ containing
at most two vertices. Let $I\subseteq V(G-P)$ be an independent set.
If $L$ is a $(*,1)$-list assignment satisfying the following conditions: 
\begin{enumerate}[(i)]\itemsep0em 
\item $|L(v)|\geq4-\iota_{I}(v)-\iota_{V(F)}(v)-2\iota_{V(P)}(v)$ for $v\in V(G)$,
\label{cond:in4} 
\item $P$ is $L$-colorable,\label{cond:LP} 
\item for every $v\in I$ there is at most one $p\in N(v)\cap V(P)$ with
$(L(p) \cap L(v)) \neq \emptyset$,\label{cond:IP} 
\end{enumerate}
then $G$ is $L$-colorable. \end{theorem}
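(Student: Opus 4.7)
The plan is to prove Theorem~\ref{thm:6thomassen} by induction on $|V(G)|$, following Thomassen's precoloring extension framework and exploiting the separation hypothesis $|L(u)\cap L(v)|\leq 1$ to compensate for the reduced list sizes. The base case $V(G)=V(P)$ is immediate from condition~(\ref{cond:LP}). In the inductive step I would first apply standard reductions: assume $G$ is $2$-connected (otherwise decompose at a cut vertex so that $P$ lies in a single block), assume the outer face $F$ is a cycle containing $P$ and that $G$ is a near-triangulation (adding an interior edge only tightens the problem), and trim each $L(v)$ down to its minimum required size. If $|V(P)|<2$, I would extend $P$ by a suitably colored vertex of $F$ before proceeding. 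If $F$ has a chord $uv$ with not both endpoints in $V(P)$, I would split $G$ along $uv$ into $G_1\supseteq P$ and $G_2$, apply induction to $G_1$ to determine $\varphi(u),\varphi(v)$, and then apply induction to $G_2$ with precolored path $uv$.

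The main step is the case where $F$ is an induced cycle and $P=p_1p_2$. Let $v$ be the neighbor of $p_1$ on $F$ distinct from $p_2$, and let $u_1,\dots,u_m$ be the interior neighbors of $v$ listed in near-triangulation order between $p_1$ and $v$'s other $F$-neighbor $v'$. Pick two distinct colors $c_1,c_2\in L(v)\setminus\{\varphi(p_1)\}$; this is possible when $|L(v)|\geq 3$, i.e., when $v\notin I$, since $\varphi(p_1)$ forbids at most one color of $L(v)$. Form $G'=G-v$ with $L'(u_i)=L(u_i)\setminus\{c_1,c_2\}$ and all other lists unchanged. By the separation hypothesis, $|L(u_i)\cap\{c_1,c_2\}|\leq |L(u_i)\cap L(v)|\leq 1$, so $|L'(u_i)|\geq|L(u_i)|-1$, which is exactly the requirement on $u_i$ after it moves from the interior to the outer face of $G'$ (its index $\iota_{V(F)}$ increases by one). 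After checking that conditions~(\ref{cond:in4})--(\ref{cond:IP}) transfer to $(G',L')$ with the same $I$ and $P$, I would apply the induction and finish by assigning $v$ whichever of $c_1,c_2$ differs from $\varphi(v')$; this is legitimate because the $u_i$ have colors outside $\{c_1,c_2\}$ and $\varphi(p_1)\neq c_1,c_2$.

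The main obstacle I expect is the low-list subcase in which $v\in I\cap V(F)$ has $|L(v)|=2$, so the Thomassen trick cannot extract two distinct colors in $L(v)\setminus\{\varphi(p_1)\}$. The remedy is to exploit condition~(\ref{cond:IP}): since $v\in I$, at most one $P$-neighbor of $v$ shares colors with $L(v)$, leaving at least one admissible color, which I would use to commit a color to $v$ and then reduce to a smaller instance in which $v$ has been removed from $G$ (and from $I$), taking care that the single color removed from $L(v')$ by separation still leaves $v'$ with a large enough list, possibly by treating $\{v,v'\}$ jointly. A secondary delicate point lies in the chord reduction: an $I$-vertex $w\in V(G_2)$ adjacent to both chord endpoints could share colors with both, violating condition~(\ref{cond:IP}) for the $G_2$-instance. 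Handling this requires either choosing $\varphi(u),\varphi(v)$ during the induction on $G_1$ so as to avoid colors of such $w$, or a dedicated subcase that removes $w$ from $I$ and verifies that the strengthened list requirement is still met; most of the bookkeeping in the full proof will concentrate here.
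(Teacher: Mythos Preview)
Your skeleton is close to the paper's, but the main reduction step diverges in a way that creates the very gap you flag, and your proposed patch for it does not close it.

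\textbf{The main step.} You follow classical Thomassen: take $v$ adjacent to $p_1$, reserve two colors $c_1,c_2\in L(v)\setminus\{\varphi(p_1)\}$, strip them from the interior neighbours, recurse, and afterwards pick $c_j\neq\varphi(v')$. The paper does something different and stronger. It selects the \emph{first} outer vertex $v_i\in V(F)\setminus(I\cup V(P))$ (not required to be adjacent to $P$), and immediately \emph{commits} a single color $\varphi(v_i)\in L(v_i)\setminus\bigl(L(v_{i-1})\cup L(v_{i+1})\bigr)$; this exists because $|L(v_i)|\ge 3$ while each face-neighbour shares at most one color with $v_i$ by separation. Consequently the lists of the two face-neighbours are untouched after deleting $v_i$, so the recursion needs no ``save two colors'' bookkeeping and, crucially, the case $v\in I$ never arises. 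Your approach forces $v$ to be the vertex next to $p_1$, which may lie in $I$; the remedy you sketch (commit one color $c$ to $v$ and delete) fails because $v'$ is already on $F$, and if $c\in L(v')$ you shrink $L(v')$ below its required size. ``Treating $\{v,v'\}$ jointly'' is not enough detail to see how you avoid this.

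\textbf{Near-triangulation.} You cannot assume $G$ is a near-triangulation: adding an interior edge $uw$ may violate the $(\ast,1)$ hypothesis, since nonadjacent vertices are allowed $|L(u)\cap L(w)|\ge 2$. The paper never triangulates.

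\textbf{Chords.} Your chord reduction is essentially what the paper does, including the issue you anticipate with an $I$-vertex $w$ adjacent to both endpoints. The paper's fix is concrete: it attaches a phantom copy $v'$ of $w$ to $G_1$ with list $L(wx)\cup L(wy)$, so that the induced coloring of $G_1$ automatically satisfies condition~(iii) for $w$ in $G_2$. Note, however, that this reduction only applies when the smaller side has at least four vertices; the paper shows that the surviving chords are exactly of the form $v_jv_{j+2}$ over a degree-$2$ vertex $v_{j+1}\in I$, and handles the interaction of such a chord with the deleted vertex via three explicit subcases (X1)--(X3). Your assumption ``$F$ is an induced cycle'' after the chord step is therefore not justified, and the leftover short chords must still be dealt with.
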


\begin{proof} Let $G=(V,E)$ and $L$ be a counterexample where $|V|+|E|$
is as small as possible. Moreover, assume that the sum of the sizes
of the lists is also as small as possible subject to the previous
condition. Define $L(uv)=L(u)\cap L(v)$ if $uv\in E$; else $L(uv)=\emptyset$.
Since $G$ is minimal, we have:

\begin{claim}\label{cl0}
For all edges $uv,vw,uw\in E\setminus E(P)$
\begin{enumerate}[(1)]\itemsep0em 
\item \label{L=00003D1}$|L(uv)|=1$;
\item \label{Lu}$L(u)=\bigcup_{v\in N(u)}L(uv)$; and
\item \label{3. Ltri}$L(uv)=L(vw)$ implies $L(uv)=L(uw)$ for every triangle
$uvwu$.
\end{enumerate}
\end{claim}

\begin{proof}
For (\ref{L=00003D1}), note that $|L(uv)|\leq1$, and if $L(uv)=\emptyset$
then it suffices to  $L$-color $G-uv$, which is possible by
minimality. For (\ref{Lu}), the definitions imply $L(u)\supseteq\bigcup_{v\in N(u)}L(uv)$,
and if $\gamma\in L(u)\setminus\bigcup_{v\in N(u)}L(uv)$ then $L$-coloring
$G-u$, and then coloring $u$ with $\gamma$ yields an $L$-coloring
of $G$. Finally consider (\ref{3. Ltri}). By (\ref{L=00003D1}),
there exists a color $\gamma$ with $L(uv)=\{\gamma\}=L(vw)$. Thus
$\gamma\in L(u)\cap L(w)$, so by definition and (\ref{L=00003D1}),
$L(uw)=\{\gamma\}$.
\end{proof}

\begin{claim}\label{2conn}$G$ is 2-connected. In particular, $F$
is a cycle.\end{claim}
\begin{proof}
Suppose not. Then there exists $v\in V$ and two induced connected
subgraphs $G_{1}$ and $G_{2}$ of $G$ where $G_{1}\cap G_{2}=v$
and $G_{1}\cup G_{2}=G$. Moreover, both $G_{1}$ and $G_{2}$ have
at least two vertices. By symmetry assume that $P\subseteq G_{1}$.
By the minimality of $G$, there exists an $L$-coloring $\varphi$
of $G_{1}$. Let $L_{2}$ be a list assignment on $V(G_{2})$ such
that $L_{2}(u)=\{\varphi(v)\}$ if $u=v$, and $L_{2}(u)=L(u)$ otherwise.
Since $L_{2}$ and $G_{2}$ satisfy the assumptions of Theorem~\ref{thm:6thomassen},
there exists an $L_{2}$-coloring $\psi$ of $G_{2}$. Colorings $\varphi$
and $\psi$ coincide on $v$; hence $\varphi\cup\psi$ is an $L$-coloring
of $G$, a contradiction.
\end{proof}

\begin{claim}\label{cl:FnotI}
(1) $|N(v) \cap V(P)| \leq 1$ for all $v\in I$, and (2) $V(F)\setminus(I\cup V(P)) \neq \emptyset$.
\end{claim}

\begin{proof}
The minimality of $G$ and (\ref{cond:IP}) imply (1).
Using Claim~\ref{2conn}, $F-P$ is a path. Since  $I$  is independent, if $V(F)\subseteq I\cup V(P)$ then $|I\cap V(F)|=1$, contradicting (1).
\end{proof}

\begin{claim}\label{cl:septriangle} $G$ does not contain a separating
triangle with a vertex in $I$. \end{claim} \begin{proof} Let $T=xyz$
be a separating triangle in $G$ and let $x\in I$. Assume that $P\subseteq\cex_{T}(G)$
and $|V(\cin_{T}(G))|\geq4$. By the minimality of $G$, there exists
an $L$-coloring $\varphi$ of $\cex_{T}(G)$.

Let $G':=\cin_{T}(G)-z$, $I':=I\setminus V(\cex_{T}(G))$ and $P'=xy$.
Define a list assignment $L'$ on vertices $u\in V(G')$ in the following
way: 
\[
L'(u)=\begin{cases}
\varphi(u) & \text{ if }u\in\{x,y\},\\
L(u)-\varphi(z) & \text{ if }uz\in E(G-P'),\\
L(u) & \text{otherwise}.
\end{cases}
\]
Since $x\in I$, no neighbor of $x$ is in $I'$. Thus condition (\ref{cond:IP})
of Theorem~\ref{thm:6thomassen} is satisfied for $G',I',P'$ and
$L'$. Condition (\ref{cond:LP}) is witnessed by $\varphi$. Since each vertex
$u\in N_{G'}(z)$ is on the outer face of $G'$, but not $G$, it
is straightforward to check that (\ref{cond:in4}) is satisfied. Hence
$G'$ has an $L'$-coloring $\varphi$. The coloring $\varphi\cup\psi$
is an $L$-coloring of $G$, a contradiction. \end{proof}

\begin{claim}\label{cl:chord} If $xy$ is a chord of $F$ then neither
$x$ nor $y$ is in $V(P)$, and there exists $z\in I\cap V(F)$ such
that  $|L(z)|=2=d(z)$, $L(zx)\neq L(zy)$, and $xzy\subseteq F$.
\end{claim}

\begin{proof} Suppose $xy\in E$ is a chord of $F$. Let $G_{1}$
and $G_{2}$ be subgraphs of $G$ where $G_{1}\cap G_{2}=xy$ and
$G_{1}\cup G_{2}=G$. Since $xy$ is a chord, both $G_{1}$ and $G_{2}$
have at least three vertices. By symmetry assume that $P\subset G_{1}$.

First suppose $G_{2}$ contains exactly three vertices, say $x,y,z$.
Using Claim~\ref{cl0},  $2\leq|L(z)|=|L(zx)\cup L(zy)|\leq d(z)\leq2$.
So $|L(z)|=2=d(z)$ and $L(zx)\neq L(zy)$. By condition~(\ref{cond:in4}),
$|L(z)|=2$ implies $z\in I\cap V(F)$. Thus $xzy\subseteq F$, since
$x$ and $y$ are the only possible neighbors of $z$. Finally, since
$L(zx)\ne L(zy)$, Claim~\ref{cl0}.\ref{3. Ltri} implies $L(xy)\nsubseteq L(zx)\cup L(zy)$.
Thus $|L(x)|,|L(y)|\geq2$, and so $x,y\notin P$.

Now suppose for a contradiction that $G_{2}$ has at least four vertices.
Define $G_{1}'$ in the following way. If there exists a vertex $v\in V(G_{2})\cap I$
such that $v$ is adjacent to both $x$ and $y$ then $G_{1}'$ is
obtained from $G_{1}$ by adding a new vertex $v'$ adjacent to $x$
and $y$ to the outer face of $G$. Moreover, let $I'=(I\cap G_{1})\cup\{v'\}$
and let $L'$ be an extension of $L$ by defining $L'(v')=L(vx)\cup L(vy)$.
Notice that $v$ is unique if it exists, since Claim~\ref{cl:septriangle}
implies $G$ has no separating triangles that contain a vertex of
$I$. If no such $v$ exists, let $G_{1}'=G_{1}$, $L'=L$, and $I'=I\cap V(G_{1})$.
If $G_{1}'$ contains $v'$, neither $x$ nor $y$ is in $I$. Hence
$I'$ is indeed an independent set. Using that $v'\in I'$ is on the
outer face$,$ $L'$ satisfies conditions (\ref{cond:in4},\ref{cond:LP},\ref{cond:IP}).
By the minimality of $G$, there exists an $L'$-coloring $\varphi$
of $G'$ which gives an $L$-coloring of $G_{1}$.

Define a list assignment $L_{2}$ on $V(G_{2})$ by $L_2(u)=\{\varphi(u)\}$
if $u\in\{x,y\}$, else $L_{2}(u)=L(u)$. We wish to use $xy$
as $P$. Conditions (\ref{cond:in4},\ref{cond:LP}) of Theorem~\ref{thm:6thomassen} hold since $G$ satisfies 
them. For (\ref{cond:IP}), consider a vertex $w\in I$ with $\{x,y\}\subseteq N(w)$.
As remarked above, $w=v$. Since $L'(v)=L(vx)\cup L(vy)$, and $\varphi$
is an $L'$-coloring of $G'$, $ $$ $there exists $u\in\{x,y\}$
with $\varphi(v)\in L(vu)$. Then $\varphi(v)\ne\varphi(u)$ implies
$\varphi(u)\notin L(v)$, and (\ref{cond:IP}) holds. By the minimality
of $G$, there exists an $L_{2}$-coloring $\psi$ of $G_{2}$. Colorings
$\varphi$ restricted to $G_{1}$ and $\psi$ coincide on $xy$; hence
$\phi\cup\psi$ is an $L$-coloring of $G$, a contradiction.
\end{proof}

By the minimality of the sum of the sizes of the lists, we can assume
$|V(P)|\geq1$. Let $F=v_{0}v_{1}v_{2}v_{3}\dots v_{t}$, where $v_{0}\in V(P)\subseteq\{v_{0},v_{1}\}$,
identifying index $i$ with index $i+t+1$. Choose $v_{i}\in V(F)\setminus(I\cup V(P))$ with minimum index
$i$. Such an index exists by Claim~\ref{cl:FnotI}. Claim~\ref{cl:chord} implies $v_{i}v_{i-2}$ is not 
a chord, and condition (\ref{cond:in4}) implies $L(v_{i})-L(v_{i}v_{i-1})-L(v_{i}v_{i+1})\ne\emptyset$.

Select a set $X\subseteq\{v_{i},v_{i+1},v_{i+2}\}$ and an $L$-coloring
$\varphi$ of $X$ by the following rules: 
\begin{enumerate}[(X1)]\itemsep0em 
\item If $v_{i}v_{i+2}$ is not a chord then set $X=\{v_{i}\}$ and pick
$\varphi(v_{i})\in L(v_{i})\setminus(L(v_{i-1})\cup L(v_{i+1}))$. 
\item Else, if there is $c\in L(v_{i})\setminus(L(v_{i-1})\cup L(v_{i+1})\cup L(v_{i+2}))$,
then set $X=\{v_{i}\}$ and $\varphi(v_{i})=c$. 
\item Else set $X=\{v_{i},v_{i+1},v_{i+2}\}$. Pick:

\begin{enumerate}\itemsep0em 
\item $\varphi(v_{i+2})\in L(v_{i+2})\setminus(L(v_{i+3})\cup L(v_{i+3}v_{i+4}))$; 
\item $\varphi(v_{i})\in L(v_{i}v_{i+2})$, if $\varphi(v_{i+2})\notin L(v_{i}v_{i+2})$;
else $\varphi(v_{i})\in L(v_{i}v_{i+1})$; 
\item $\varphi(v_{i+1})\in L(v_{i+1})-\varphi(v_{i})-\varphi(v_{i+2})$. 
\end{enumerate}
\end{enumerate}

See Figure~\ref{fig-defineX} for an illustration of these rules.
Observe that exactly one of (X1), (X2), or (X3) applies and $X$ is
well defined. Also, in cases (X2) and (X3), Claim~\ref{cl:chord}
implies $v_{i+1}\in I$, and so $v_{i+2}\notin I$. Thus the sizes
of their lists are as claimed in Figure~\ref{fig-defineX}. In (X3)
either $\varphi(v_{i})\in L(v_{i}v_{i+2})$ or $\varphi(v_{i+2})\in L(v_{i}v_{i+2})$.
Also, by Claims~\ref{cl0}(\ref{3. Ltri}) and \ref{cl:chord}, $L(v_{i}v_{i+2})\not\subseteq L(v_{i+1})$. 
Hence $\varphi$ is also well defined. Moreover, $d(v_{i+1})=2$,
and so $N(v_{i+1})=\{v_{i},v_{i+1}\}$. 

Let $G'=G-X$, $I'=I\setminus X$, and $L'$ be the list assignment
on $V(G')$ defined by 
\[
L'(v)=L(v)\setminus\{\varphi(x):x\in N(v)\cap X\}.
\]
It suffices to show that $G'$, $L'$, $I'$ and $P$ satisfy the
assumptions of Theorem~\ref{thm:6thomassen}. Then by the minimality
of $G$, there is an $L'$-coloring $\psi$ of $G'$, and by the choice
of $L'$, the function $\psi\cup\varphi$ is an $L$-coloring of $G$,
a contradiction.

Now we verify that $G'$, $L'$, $I'$ and $P$ satisfy the assumptions
of Theorem~\ref{thm:6thomassen}. Since $I$ is an independent set,
so is $I'$. Let $M=\{v\in V(G'):L'(v)\neq L(v)\}$. Clearly condition~(\ref{cond:in4})
holds for vertices in $V\setminus M$. By Claim~\ref{cl:chord},
all chords have the form $v_{j}v_{j+2}$. Thus $\varphi$ was chosen
so that $M\cap V(F)=\emptyset$. Hence the condition (\ref{cond:in4})
is satisfied for $v\in V(F)$. Condition 
(\ref{cond:LP}) holds since $P$ did not change. Since $I'\subseteq I$,
Claim~\ref{cl:FnotI}(1) implies condition  (\ref{cond:IP}). 

It remains to show that every $v\in M$ satisfies condition (\ref{cond:in4}).
Let $F'$ be the outer face of $G'$. Since each vertex of $M$ has
a neighbor in $X\subseteq F$, $M\subseteq F'\setminus F$. Thus it
suffices to show that $|L'(v)|\geq|L(v)|-1$. If $|N(v)\cap X|=1$
then $|L'(v)|\geq|L(v)|-1$. Otherwise $|N(v)\cap X|\geq2$. Then
$v$ is handled by rule (X3). So $N(v)\cap X=\{v_{i},v_{i+2}\}$ and
$L(v_{i}v_{i+2})\subset C:=\{\varphi(v_{i}),\varphi(v_{i+2})\}$. If $L(vv_{i})\ne L(vv_{i+2})$
then Claim~\ref{cl0}(\ref{3. Ltri}) implies $L(v_{i}v_{i+2})\not\subset L(v)$.
Anyway, $|L(v)\cap C|\leq1$, and we are done. \end{proof}

\begin{figure}[htp]
\begin{centering}
\includegraphics{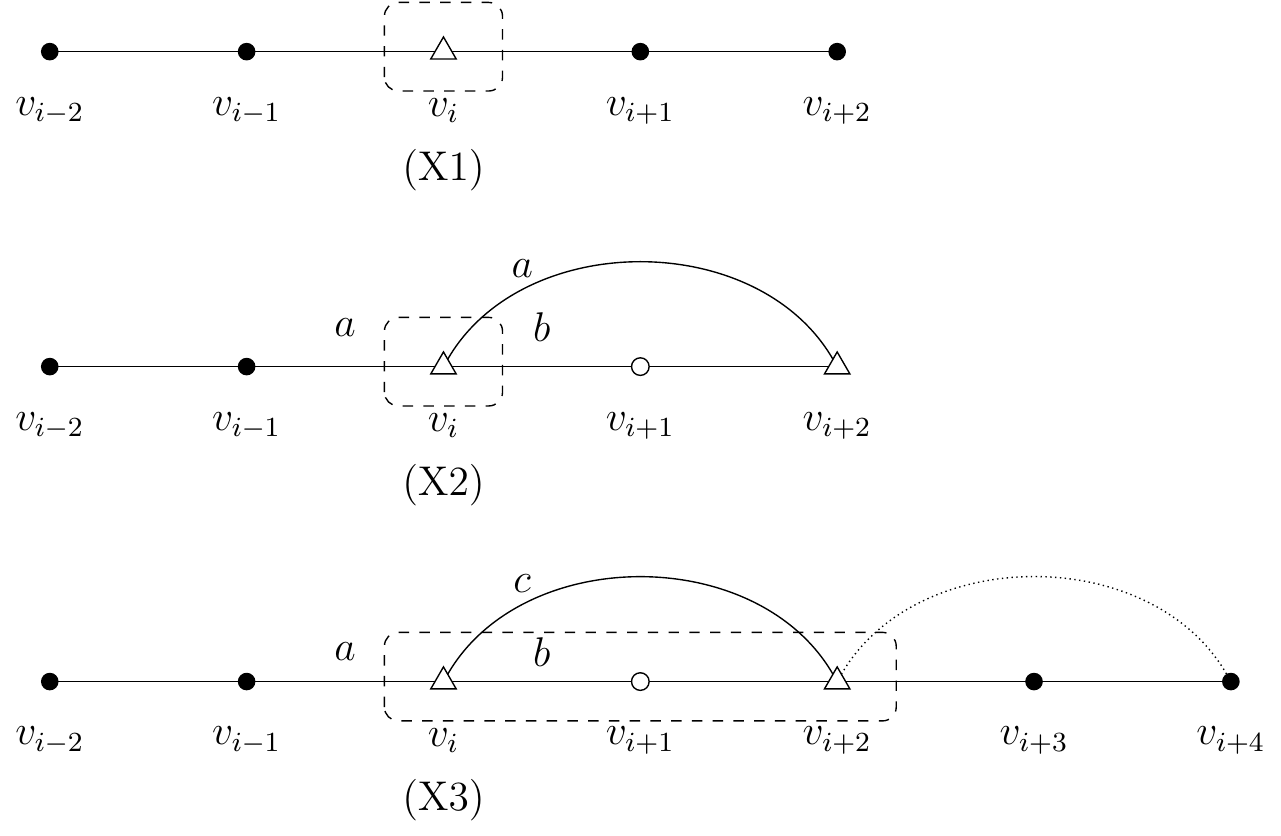} 
\par\end{centering}

\caption{Rules (X1), (X2), and (X3). A black circle is a vertex with arbitrary
list size, a white circle is a vertex with list size two, and a triangle
is a vertex with list size at least three. The dashed box indicates
$X$ and a label on an edge is the common color of lists of its endpoints.}
\label{fig-defineX} 
\end{figure}

\section{Lists of size 3 are necessary}

In this section we give a proof of Theorem~\ref{thm:3necessary}. The
construction is analogous to the construction that bipartite graphs
are not 2-choosable.

\begin{proof}[Proof of Theorem~\ref{thm:3necessary}] Let $k$
be given. Let $G$ be a complete bipartite graph with part $X$ of
size $(k-1)^{2}$ and another part of size 2 formed by vertices $a$
and $b$. Let $L$ be a list assignment assigning to $a$ a list of
colors $\{a_{1},\ldots,a_{k-1}\}$ and to $b$ a list of colors $\{b_{1},\ldots,b_{k-1}\}$.
To the other vertices, $L$ assigns distinct lists of form $\{a_{i},b_{j}\}$
where $1\leq i,j\leq k-1$. There are $(k-1)^{2}$ such lists which
is exactly the size of $X$. Notice that $|L(u)\cup L(v)|=k$ for
every edge $uv$. See Figure~\ref{fig-3needed} for a sketch of $G$
and $L$.

\begin{figure}[htp]
\begin{centering}
\includegraphics{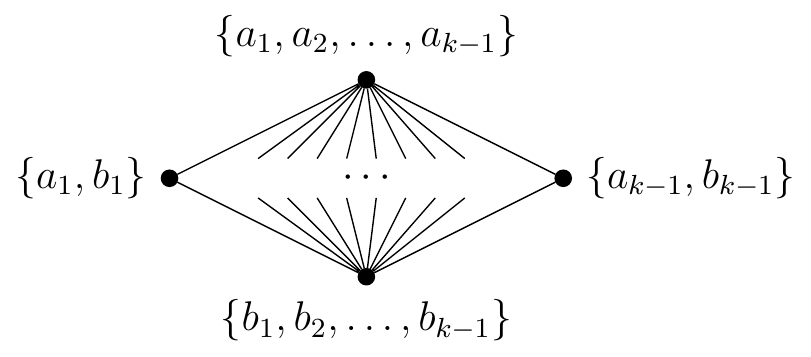} 
\par\end{centering}
\caption{Construction from Theorem~\ref{thm:3necessary}.}
\label{fig-3needed} 
\end{figure}

Suppose that there is an $L$-coloring of $G$. It assigns colors
$a_{i}$ to $a$ and $b_{j}$ to $b$ for some $1\leq i,j\leq k-1$.
However, there is a vertex with list $\{a_{i},b_{j}\}$, a contradiction.
Hence $G$ is not $L$-colorable. \end{proof}

\end{document}